\newcommand{\pic}[2]{\raisebox{-.5\height}
{\includegraphics[scale=#2]{#1}}}
\def\DiagramaPretzel{\pic{DiagramaPretzel}{.500}}
\def\MovimientoReduccion{\pic{MovimientoReduccion}{.300}}
\def\MovimientoBasicoAislado{\pic{MovimientoBasicoAislado}{.290}}
\def\MovimientoBasicoNumeroImparDeCruces{\pic{MovimientoBasicoNumeroImparDeCruces}{.280}}
\def\MovimientoBasicoNumeroParDeCruces{\pic{MovimientoBasicoNumeroParDeCruces}{.280}}
\def\DesplazamientoColumna{\pic{DesplazamientoColumna}{.600}}
\def\NTresNoTodasImparesOrientacion{\pic{NTresNoTodasImparesOrientacion}{.400}}
\def\NTresNoTodasImparesTrenza{\pic{NTresNoTodasImparesTrenza}{.400}}
\def\NTresTodasImparesOrientacion{\pic{NTresTodasImparesOrientacion}{.370}}
\def\NTresTodasImparesMovimientosBasicos{\pic{NTresTodasImparesMovimientosBasicos}{.370}}
\def\NparOrientacion{\pic{NparOrientacion}{.485}}
\def\NparCirculosSeifert{\pic{NparCirculosSeifert}{.485}}
\def\NparDesplazamientos{\pic{NparDesplazamientos}{.550}}
\def\NparTrenza{\pic{NparTrenza}{.550}}
\def\NimparPrimeraColumnaParOrientacion{\pic{NimparPrimeraColumnaParOrientacion}{.500}}
\def\NimparPrimeraColumnaParOrientando{\pic{NimparPrimeraColumnaParOrientando}{.500}}
\def\NimparPrimeraColumnaParMovimientosBasicos{\pic{NimparPrimeraColumnaParMovimientosBasicos}{.500}}
\def\NimparPrimeraColumnaParDesplazamientos{\pic{NimparPrimeraColumnaParDesplazamientos}{.650}}
\def\NimparPrimeraColumnaParTrenza{\pic{NimparPrimeraColumnaParTrenza}{.650}}
\def\NimparTodaEntradaImparOrientacion{\pic{NimparTodaEntradaImparOrientacion}{.500}}
\def\NimparTodaEntradaImparMovimientosBasicos{\pic{NimparTodaEntradaImparMovimientosBasicos}{.450}}
\def\NimparTodaEntradaImparEspacioParaReduccion{\pic{NimparTodaEntradaImparEspacioParaReduccion}{.450}}
\def\NimparTodaEntradaImparReduccionMultiple{\pic{NimparTodaEntradaImparReduccionMultiple}{.650}}
\def\NimparTodaEntradaImparReduccionMultipleDetalle{\pic{NimparTodaEntradaImparReduccionMultipleDetalle}{.500}}
\def\NimparTodaEntradaImparGrupos{\pic{NimparTodaEntradaImparGrupos}{.650}}
\def\Ejemplo{\pic{Ejemplo}{.080}}
\newtheorem{theorem}{Theorem}
\renewenvironment{proof}[1][Proof]{\textit{#1.} }
{\hfill \rule{0.5em}{0.5em}}
\newcommand{\tn}{\textnormal}
\newcommand{\da}{\downarrow}
\newcommand{\ua}{\uparrow}
\begin{document}







\title{Braids for pretzel links}

\author{A. Del Pozo Manglano and P. M. G. Manch\'on \footnote{\hspace{-0.02cm}The second author is partially supported by MEC-FEDER grant MTM2016-76453-C2-1-P.}}
\maketitle

\begin{abstract}
We give a general procedure that provides, given any particular pretzel link, a braid whose closure is the pretzel link. Moreover, we manage to give a specific braid word in terms of the entries of the pretzel link.
\end{abstract}

\textbf{Keywords:} \emph{Pretzel link, closed braid, Seifert circle, reducing move, complexity.}

\textbf{MSC Class:} \emph{57M25.}

\section{Introduction} \label{SectionIntroduction}
Given integers $a_1,...,a_n$, denote by $P(a_{1},...,a_{n})$ the pretzel link diagram shown in Figure \ref{FigureDiagramaPretzel}. Here $a_i$ indicates $|a_i|$ crossings, with signs $a_i/|a_i|$ if $a_i\neq 0$ (we follow the notation in \cite{Lickorish}).
\begin{figure}[ht!]
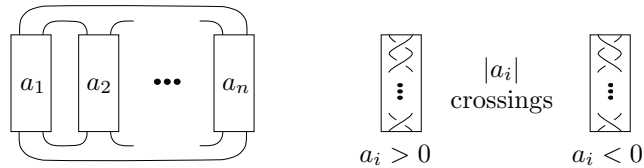
 
\labellist
 \pinlabel {$a_1$} at 17 57
 \pinlabel {$a_2$} at 67 57
 \pinlabel {$a_n$} at 170 57
 \pinlabel {$a_i>0$} at 290 5
 \pinlabel {$a_i<0$} at 450 5
 \pinlabel {\begin{tabular}{c}$|a_i|$\\ crossings\end{tabular}} at 370 57
\endlabellist
\begin{center}
\DiagramaPretzel
\end{center}
\caption{Pretzel link diagram $P(a_{1},...,a_{n})$.}
\label{FigureDiagramaPretzel} 
\end{figure}

A pretzel link is a link that has a pretzel diagram. It is not difficult to see that $P$ represents a link with more than one component if there are two or more even entries, or if there is an even number of entries and all of them are odd ($n$ is even and each $a_i$ is odd).

There are many algorithms for calculating polynomial invariants for a link, from a braid whose closure is the link (see for example the programs by Morton and Short \cite{Morton}). The necessity of computing polynomial invariants for pretzel links was the original motivation for trying to find, from any pretzel link $P=P(a_1, \dots, a_n)$, a braid $\beta$ whose closure is $P$, i.e., $P=\hat{\beta}$. Some particular cases of these braids can be found in \cite{KnotAtlas} and \cite{KnotInfo}. The paper \cite{intento} addresses this issue but solves just a few examples.

Alexander \cite{Alexander} proved in 1923 that every link is the closure of a braid. However, his strategy is geometric and does not provide a practical algorithm for finding the braid. A more combinatorial (and recent) argument can be deduced from the work by Shuji Yamada \cite{Yamada}, Pierre Vogel \cite{Vogel} and  Pawel Traczyk \cite{Traczyk}. Peter Cromwell nicely explains in his book \cite{Peter} the proof given by Pawel Traczyk \cite{Traczyk}, based on the so called reducing move shown in Figure~\ref{FigureMovimientoReduccion}. 
\begin{figure}[ht!]
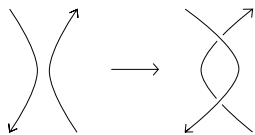

\begin{center}
\MovimientoReduccion
\end{center}
\caption{A reducing move. The two arcs on the left must belong to different Seifert circles.} 
\label{FigureMovimientoReduccion}
\end{figure}

In general, any diagram can be converted in a braid diagram performing some reducing moves. The complexity of an oriented diagram is the number of incompatible pairs of oriented Seifert circles, where two oriented disjoint circles in the sphere are said to be incompatible if both circles have the orientation inherited from an orientation in the annulus which is a cobordism of them (a less sophisticated definition is the following: two oriented Seifert circles are incompatible if, seen as gears that rotate according to their orientations, are incompatible). A diagram is a braid diagram if and only if its complexity is zero. Then one can check that each reducing move reduces the complexity of the diagram by one, and that if the complexity is greater than zero, then it is possible to apply a reducing move. 

However, the direct application of this strategy over the pretzel diagrams drives us quickly to an intricate mess. This has obligated us to develop a couple of new moves on a pretzel diagram, identified in the paper as {\it basic move on a column} and {\it column shift}. These two moves, together with a multiple reducing move, will be the tools for obtaining our algorithms. 

The obtained braids are far from being minimal respect to the number of crossings. However, we have checked in many cases that they are minimal respect to the number of strands. Theorem~\ref{TheoremEvenEntries} provides a braid with $4$ strands for the pretzel knot $P(1,1,1,-2)$ although its minimal braid number is $3$; nevertheless one should note that this pretzel knot can be written with only three entries.

The paper is organized as follows: in Section~\ref{SectionMoves} we introduce the two special types of moves, to be applied on pretzel diagrams. Section~\ref{SectionResults} contains the general strategy to go from any arbitrary pretzel diagram to the braid diagram, distinguishing whether there is an even number of entries (Theorem~\ref{TheoremEvenEntries}) or an odd number of entries (Theorem~\ref{TheoremOddEntries}). In both cases specific braid words for each pretzel diagram are displayed; these words are in some cases a bit complicated, and needs some codification that we provide with the necessary notation. The case of three entries is shown apart for its special relevance (Theorem~\ref{TheoremThreeEntries}).

\section{Special moves} \label{SectionMoves}
We first introduce the special moves that will be applied on the pretzel diagram in order to obtain the wanted braid. 
A basic move (in a column of crossings) takes the over (under) segment of a crossing of a column and take it over (under) the rest of the column. Figure~\ref{FigureMovimientoBasicoAislado} shows a basic move of the third over segment. Note that a basic move increases the number of crossings by one except if it is applied on the last crossing, in which case it has no effect at all.
\begin{figure}[ht!]
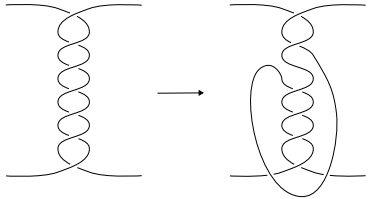

\begin{center}
\MovimientoBasicoAislado
\end{center}
\caption{A basic move.} \label{FigureMovimientoBasicoAislado}
\end{figure}

If a column has its two strands oriented in opposite directions, applying appropriate basic moves gives a nice configuration of concentric Seifert circles. 

Precisely, if the number $|a|$ of crossings of the column is odd, performing $\frac{|a|-1}{2}$ appropriate basic moves we obtain exactly $\frac{|a|-1}{2}$ concentric Seifert circles. The basic moves can be performed on the crossings in odd positions (first, third and so on until the antepenultimate), or in even positions (second, fourth, etc.), as Figure~\ref{FigureMovimientoBasicoNumeroImparDeCruces} shows in the case $a=7$. In a Seifert circles diagram, a colored segment (we call it a scar) recalls us where a crossing was: it is colored green if the crossing was positive, red if negative. 
\begin{figure}[ht!]
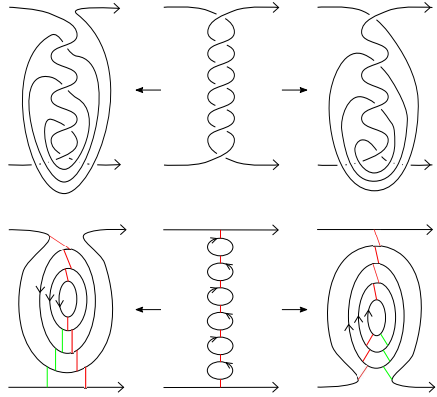

\begin{center}
\MovimientoBasicoNumeroImparDeCruces
\end{center}
\caption{Basic moves on a column with an odd number $|a|$ of crossings  (here $a=7$). The top central picture is the original column of crossings. Basic moves on the odd (even) crossings produce the picture on the left (right). How it affects the Seifert circles is shown in the bottom line.} 
\label{FigureMovimientoBasicoNumeroImparDeCruces}
\end{figure}

If $|a|$ is even, performing $\frac{|a|}{2}$ appropriate basic moves we obtain exactly $\frac{|a|}{2}$ concentric Seifert circles. Once more, the basic moves can be performed on the crossings in odd positions obtaining $\frac{|a|}{2}$ extra crossings, or in even positions obtaining $\frac{|a|}{2}-1$ extra crossings (the last basic move has no effect at all), as Figure~\ref{FigureMovimientoBasicoNumeroParDeCruces} shows in the case $a=8$.
\begin{figure}[ht!]
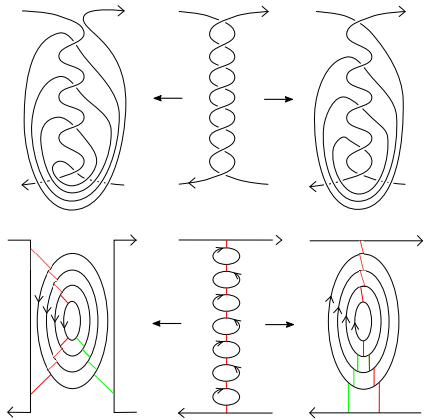

\begin{center}
\MovimientoBasicoNumeroParDeCruces
\end{center}
\caption{Basic moves on a column with an even number $|a|$ of crossings (here $a=8$). The top central picture is the original column of crossings. Basic moves on the odd (even) crossings produce the picture on the left (right). How it affects the Seifert circles is shown in the bottom line.}
\label{FigureMovimientoBasicoNumeroParDeCruces}
\end{figure}

The following remarks are valid whether $|a|$ is even or odd (Figures~\ref{FigureMovimientoBasicoNumeroImparDeCruces} and \ref{FigureMovimientoBasicoNumeroParDeCruces}). First, note that if we reverse the orientation of the two strands, then the orientation of the Seifert circles must be reversed in both initial and final Seifert circle diagrams. Second, in both Figures~\ref{FigureMovimientoBasicoNumeroImparDeCruces} and \ref{FigureMovimientoBasicoNumeroParDeCruces} we show cases in which $a>0$; if $a<0$ we must then isotopy the under segments: the oriented Seifert circles remain the same, but the scars must interchange their color in both initial and final Seifert circle diagrams.

The basic move can be seen as an intra-column move. We will now describe an inter-column version of it. A {\it column shift} is shown in Figure~\ref{FigureDesplazamientoColumna}. The chosen column is moved out to the left by a $\pi$-rotation over the paper around a vertical axis situated between the first and second columns. This creates four new crossings, two with the top horizontal line, two with the bottom horizontal line. Note that out of the pretzel diagram we obtain an exact copy of the shifted column, keeping the same {\it unoriented} sign of the crossings of the original column.
\begin{figure}[ht!]
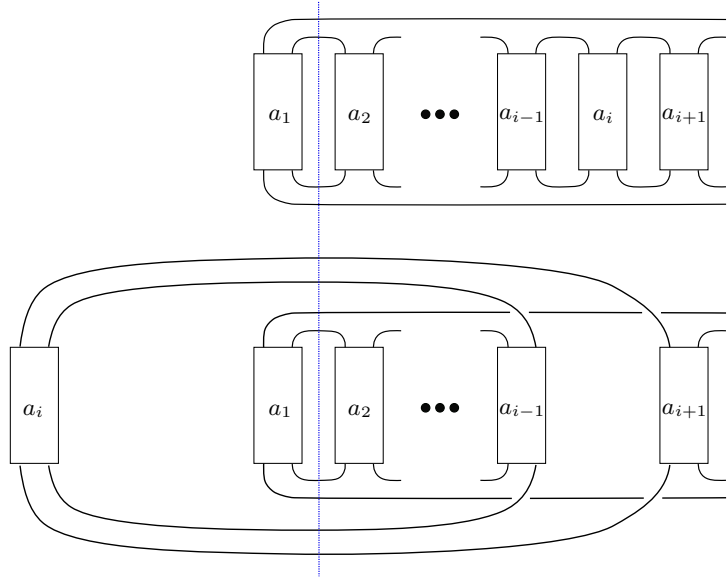

\labellist
\small
 \pinlabel {$a_1$} at 170 290
 \pinlabel {$a_2$} at 220 290
 \pinlabel {$a_{i-1}$} at 322 290
 \pinlabel {$a_i$} at 373 290
 \pinlabel {$a_{i+1}$} at 424 290
 \pinlabel {$a_1$} at 170 105
 \pinlabel {$a_2$} at 220 105
 \pinlabel {$a_{i-1}$} at 322 105
 \pinlabel {$a_i$} at 15 105
 \pinlabel {$a_{i+1}$} at 424 105
 \pinlabel {\begin{tabular}{c}$|a_i|$\\ crossings\end{tabular}} at 1285 175
\endlabellist
\begin{center}
\DesplazamientoColumna
\end{center}
\caption{Shift of the $i$th-column.} \label{FigureDesplazamientoColumna}
\end{figure}

Finally, in this Section we introduce some notation that will make easier the reading of the braid words that will appear later. 
As usual, the sequence of non-zero integers $(m_1, \dots, m_k)$ codes the braid word $\sigma_{|m_1|}^{\epsilon_1}\dots\sigma_{|m_k|}^{\epsilon_k}$
where $\epsilon_i=|m_i|/m_i$ is the sign of the integer $m_i$. Of course, $(m_1, \dots, m_r)(m_1', \dots, m_s') = (m_1, \dots, m_r, m_1', \dots, m_s')$. It will also be convenient to have a specific notation for some types of sequences of non-zero integers. If $i$ and $j$ are integers, $[i \ua j]$ will denote the sequence of integers $(i, i+1, \dots, j-1, j)$ if $i\leq j$ and $ij>0$; otherwise it will denote the empty braid word. For example, $[-5 \ua -3] = (-5,-4,-3) = \sigma_5^{-1} \sigma_4^{-1}\sigma_3^{-1}$ but $[-3 \ua -5] = \emptyset$ or $[-3\ua 5]=\emptyset$. Analogously it is considered the notation $[i \da j]$.
Also, if $a$ is a non-zero integer, we denote its sign $|a|/a$ by $s_a$ and assume the equality $s_a(m_1, \dots, m_k) = (s_am_1, \dots, s_am_k)$. Then $s_a[i \ua j]$ is $[i\ua j]$ if $a>0$ and $[-i \da  -j]$ if $a<0$. Analogously, $s_a[i \da j]$ is $[i\da j]$ if $a>0$ and $[-i \ua  -j]$ if $a<0$. Finally, $[i \ua j]^{\da m \da}$ where $m$ is a non-negative integer means $[i \ua j] [i-1 \ua j-1] [i-2 \ua j-2]\dots$ until we complete $m$ groups (in particular it is the empty braid word if $m=0$). Precisely, 
$$
[i \ua j]^{\da m \da}
= [i \ua j] [i-1 \ua j-1] \dots [i-(m-1) \ua j-(m-1)].
$$
Also reversing arrows we have, for example, $[2\ua4]^{\ua 3 \ua}  =(2,3,4,3,4,5,4,5,6)$ or $[-3\ua-1]^{\ua2\ua} =[-3\ua-1][-2\ua0]=(-3, -2, -1)$.

\section{Results} \label{SectionResults}
The pretzel diagram $P(a)$ corresponds to the trivial knot. Also, $P(a,b)=\hat{\beta}$ where $\beta$ is the braid with two strands $\beta = \sigma_1^{a+b}$. In the following theorem we will examine the case of three entries. Since we can present a pretzel diagram in a circular fashion \cite{Angel}, we can see its entries in a cyclic fashion. We will refer to this fact as the cyclic property of the pretzel diagrams. 

\begin{theorem} \label{TheoremThreeEntries} 
Let $P=P(a, b, c)$ be a pretzel diagram with three entries. Then $P=\hat{\beta}$ where:
\begin{enumerate}
\item If there is at least one even entry, that we may assume to be the central one $b$ by the cyclic property, then
$$
\beta = 1^{c} \, s_b[-2 \da -\frac{|b|}{2}-1] \, 1^a \, s_b\left([2 \ua \frac{|b|}{2}] \, [-\frac{|b|}{2}-1 \ua -2]\right).
$$

\item If there is no even entries at all, 
$$
\begin{array}{rcl}
\beta & = & [b'+c'+2 \da  b'+3]^{\ua a' \ua} \\
&& s_a([-b'-2 \da -b'-a'-1] [b'+1 \ua b'+a'] [-b'-a'-1 \ua -b'-1]) \\
&&[-b'-a'-2 \ua -b'-3]^{\da c'\da} \\
&&s_b([-b' \ua -1] [-1 \da -b'-1] [1 \ua b']) \\
&&s_c([-b'-2 \da -b'-c'-1] [b'+1 \ua b'+c'] [-b'-c'-1 \ua -b'-1])
\end{array}
$$
where
$$
a' = \frac{|a|-1}{2}, \quad
b' = \frac{|b|-1}{2} \quad \tn{ and } \quad
c' = \frac{|c|-1}{2}.
$$
\end{enumerate}

In the first case $\beta$ has $|a|+|c|+\frac{3|b|}{2}-1$ crossings and $\frac{|b|}{2}+2$ strands. In the second case $\beta$ has $|a|+|c|-1+\frac{3|b|+|ac|}{2}$ crossings and $\frac{|a|+|b|+|c|+1}{2}$ strands. 
\end{theorem}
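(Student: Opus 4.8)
The plan is to transform the pretzel diagram of Figure~\ref{FigureDiagramaPretzel} into a closed braid by a controlled sequence of the moves from Section~\ref{SectionMoves}, keeping careful track of the Seifert circles and of the crossings they carry. The first step is always to fix an orientation of $P(a,b,c)$: the parities of $a,b,c$ decide, column by column, whether the two strands run antiparallel or parallel, and hence (by the analysis around Figures~\ref{FigureMovimientoBasicoNumeroImparDeCruces} and \ref{FigureMovimientoBasicoNumeroParDeCruces}) whether suitable basic moves turn that column into a packet of concentric Seifert circles. Once every column has become such a packet, with scars recording the original signs, the diagram is a nested configuration whose complexity I would drive to zero using column shifts (Figure~\ref{FigureDesplazamientoColumna}) and reducing moves (Figure~\ref{FigureMovimientoReduccion}); reading off the generators of the resulting braid diagram yields $\beta$.

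For the first case I would orient $P$ so that the even central column $b$ has antiparallel strands. Applying $\frac{|b|}{2}$ basic moves then produces exactly $\frac{|b|}{2}$ concentric Seifert circles out of the $b$-column; together with two further strands this gives the claimed $\frac{|b|}{2}+2$. The $a$- and $c$-columns contribute the $\sigma_1$-factors $1^{a}$ and $1^{c}$, while the concentric $b$-structure, run through once inward and once outward, contributes the two $s_b[\cdots]$ blocks. The prefactor $s_b$ encodes the case $b<0$ through the scar-color interchange noted after Figure~\ref{FigureMovimientoBasicoNumeroParDeCruces}, and choosing basic moves on the even positions (so that the last one is vacuous) is what yields the $-1$ in the crossing count $|a|+|c|+\frac{3|b|}{2}-1$.

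For the second case all three columns are odd, so I would apply $a'$, $b'$ and $c'$ basic moves to the three columns, obtaining $a'+b'+c'$ concentric circles; two further strands bring the total to $\frac{|a|+|b|+|c|+1}{2}$. Here the three nested families do not sit in braid position automatically, so column shifts are needed to thread them together before the reducing moves remove the surviving incompatible pairs. The resulting word splits into five blocks: a central $s_b(\cdots)$ recording the $b$-column, the outer $s_a(\cdots)$ and $s_c(\cdots)$ recording the side columns, and the two staircase factors $[\,b'+c'+2\da b'+3\,]^{\ua a'\ua}$ and $[-b'-a'-2\ua -b'-3]^{\da c'\da}$, which are generated by the crossings that appear when the $a$- and $c$-packets are shifted across the others.

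The genuine obstacle is not the existence of the braid but the precise bookkeeping: one must check that the chosen order of basic moves, shifts and reductions produces exactly the stated words, with every generator index and every sign correct. The crossing and strand counts are the natural consistency checks. The number of strands must equal the number of Seifert circles in the final braid diagram, and the crossing count must equal $|a|+|b|+|c|$ adjusted by the crossings that the basic moves and column shifts create and that the reductions destroy; matching these against $|a|+|c|+\frac{3|b|}{2}-1$ and $|a|+|c|-1+\frac{3|b|+|ac|}{2}$ confirms that nothing has been miscounted. I expect the hardest part to be verifying the sign conventions uniformly across all the $a_i<0$ subcases, since reversing a column's sign interchanges scar colors and therefore flips the corresponding exponents throughout the word.
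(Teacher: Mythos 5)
Your treatment of case 1 is essentially the paper's proof: orient so the even central column is antiparallel, apply $\frac{|b|}{2}$ basic moves on the even crossings (last one vacuous, giving the $-1$ in the count), and read off the braid from the resulting $\frac{|b|}{2}+2$ compatible circles, with $s_b$ absorbing the scar-color interchange when $b<0$. That part is fine.

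In case 2, however, you have a genuine error: you attribute the two staircase factors to \emph{column shifts}, claiming the three packets of circles must be ``threaded together'' by shifting the $a$- and $c$-packets before reducing. The paper uses no column shifts at all in this theorem, and your mechanism cannot produce the stated word or counts. After the basic moves ($a'$ on the odd crossings of the first column, $b'$ on the even crossings of the central one, $c'$ on the odd crossings of the third), the middle packet of $2+b'$ circles is already compatible with everything; the only incompatible pairs are the $a'c'$ pairs formed by the $a'$ left circles against the $c'$ right circles. These are killed directly by multiple reducing moves, each creating two crossings, for a total of $2a'c'=\frac{(|a|-1)(|c|-1)}{2}$, and it is precisely these crossings that constitute the staircases $[b'+c'+2 \da b'+3]^{\ua a' \ua}$ and $[-b'-a'-2 \ua -b'-3]^{\da c' \da}$ (each with $a'c'$ letters). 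A column shift, by contrast, is a $\pi$-rotation of an entire column that creates exactly four new crossings with the horizontal lines (Figure~\ref{FigureDesplazamientoColumna}); any shift you insert would add crossings that have no place in the stated count, since $|a|+|c|-1+\frac{3|b|+|ac|}{2}$ equals exactly (original crossings) $+\,(a'+b'+c')$ basic-move crossings $+\,2a'c'$ reducing-move crossings. Column shifts are the tool for columns whose strands run \emph{parallel} (they appear in Theorem~\ref{TheoremEvenEntries} and in case 1 of Theorem~\ref{TheoremOddEntries}); here, with three odd entries and the orientation of Figure~\ref{FigureNTresTodasImparesOrientacionYMovimientosBasicos}, every column is antiparallel, so shifts are never needed. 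You also omit the degenerate subcases (an extreme entry equal to $\pm 1$ makes the reducing step empty, $b=\pm 1$ makes the central basic moves unnecessary), which the paper checks are still covered by the formula.
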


\begin{proof}
Suppose that there is at least one even entry. By the cyclic property we may assume that the central entry is even, and orientation can be chosen in such a way that the strands of the central column run in opposite directions, and the top horizontal line goes to left, as shown in  Figure \ref{FigureNTresNoTodasImparesOrientacion}. 
\begin{figure}[ht!]
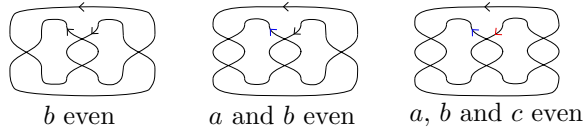
  
\labellist
 \pinlabel {$b$ even} at 67 -17
 \pinlabel {$a$ and $b$ even} at 263 -17
 \pinlabel {$a$, $b$ and $c$ even} at 470 -17
\endlabellist
\begin{center}
\NTresNoTodasImparesOrientacion 
\end{center}
\caption{Orientation for the pretzel $P(a,b,c)$ when at least one entry is even.} \label{FigureNTresNoTodasImparesOrientacion}
\end{figure}

Then we apply $\frac{|b|}{2}$ basic moves on the {\it even} crossings of the central column, adding $\frac{|b|}{2}-1$ crossings (since the last basic move is redundant). The result is (apply some isotopies if you want to see all the circles concentric in the plane, not just in the sphere) a set of $\frac{|b|}{2}+2$ Seifert circles with complexity zero (see Figure \ref{FigureNTresNoTodasImparesTrenza}). It is easy to check that in all these cases the braid word is as stated above; note that the crossings in the braid corresponding to the first entry have the same sign as $a$, and the same happens for the crossings in the third column of the pretzel diagram (see Figure \ref{FigureNTresNoTodasImparesTrenza}). 
\begin{figure}[ht!]
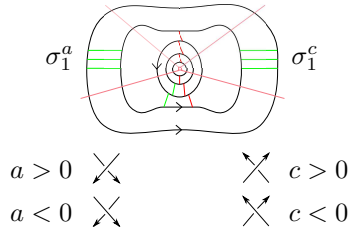
  
\labellist
 \pinlabel {$a<0$} at -38 13
 \pinlabel {$a>0$} at -38 55
 \pinlabel {$\sigma_1^a$} at -20 163
 \pinlabel {$c>0$} at 230 55
 \pinlabel {$c<0$} at 230 13
 \pinlabel {$\sigma_1^c$} at 217 163
\endlabellist
\begin{center}
\NTresNoTodasImparesTrenza
\end{center}
\caption{Braid for the pretzel $P(a,b,c)$ when at least one entry is even. A scar would have the opposite color if the corresponding entry were negative. Below: the crossings in the braid word corresponding to the extreme entries $a$ and $c$ have the signs of $a$ and $c$ respectively.} \label{FigureNTresNoTodasImparesTrenza}
\end{figure}

Now, we consider the case in which the three entries are odd, none of them equal to one. For convenience, orient the knot with the top segment running to the left (see Figure~\ref{FigureNTresTodasImparesOrientacionYMovimientosBasicos}, left side). Apply now $a'=(|a|-1)/2$ basic moves over the odd crossings of the first column, $b'=(|b|-1)/2$ basic moves over the even crossings of the central column and again $c'=(|c|-1)/2$ basic moves over the odd crossings of the third column. What we obtain (when seen in a sphere) are three groups of concentric Seifert circles, being those in the middle (a total of $2+b'$) compatible with those in the extremes (a total of $a'+c'$), as shown on the right side of Figure~\ref{FigureNTresTodasImparesOrientacionYMovimientosBasicos}.
\begin{figure}[ht!]
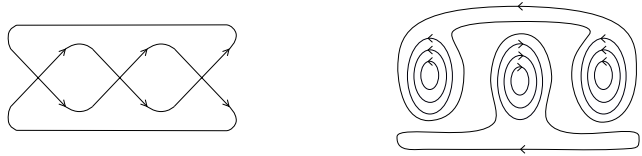
  
\labellist
\endlabellist
\begin{center}
\NTresTodasImparesOrientacion  
\hspace{2cm}
\NTresTodasImparesMovimientosBasicos
\end{center}
\caption{Orientation for the pretzel diagram $P(a,b,c)$ when $a$, $b$ and $c$ are odd, and the Seifert circles obtained after applying basic moves.} \label{FigureNTresTodasImparesOrientacionYMovimientosBasicos}
\end{figure}

In a second step, we use multiple reducing moves among the circles of both extremes, $a'$ on the left, $c'$ on the right. This produces a total of $2a'c'$ new crossings but the number of Seifert circles remains unchanged.  This provides a set of $a'+b'+c'+2$ Seifert circles with complexity zero, therefore a braid diagram with $a'+b'+c'+2 = \frac{|a|+|b|+|c|+1}{2}$ strands. The total number of crossings is $|a|+a'+|b|+b'+|c|+c'+2a'c' = |a|+|c|-1+ \frac{3|b|+|ac|}{2}$ and the obtained braid word is as stated (details in the general case of an odd number of entries will be given in the proof of Theorem~\ref{TheoremOddEntries}). If an extreme entry is equal to $1$, we do not need the second step. If $b=1$, we do not need to apply basic moves in the central column; in any case, the formula remains to be correct.
\end{proof}

\begin{theorem}\label{TheoremEvenEntries}
Let $P=P(a_1, a_2, \dots, a_n)$ be a pretzel diagram with an even number $n>2$ of entries. Then $P=\hat{\beta}$ where
$$
\begin{array}{rcl}
\beta & = & [1\ua (n-2)] \\
&&(n-1)^{a_1}(n-3)^{a_3}\dots 1^{a_{n-1}} \\
&&[-(n-2)\ua -1] \\
&&(n-1)^{a_2}(n-3)^{a_4}\dots 1^{a_n}.
\end{array}
$$
The braid $\beta$ has $2n+\sum_{i=1}^n |a_i| -4$ crossings and $n$ strands.
\end{theorem}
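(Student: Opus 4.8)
The plan is to follow the general strategy of Section~\ref{SectionResults}: fix an orientation, read off the Seifert circles, drive the complexity to zero with the special moves of Section~\ref{SectionMoves}, and finally read the braid word off the resulting concentric diagram. The first step is the orientation, and here is where the hypothesis ``$n$ even'' enters. I would orient the columns \emph{alternately}, so that columns $1,3,5,\dots$ run downward and columns $2,4,\dots$ run upward. One checks that this choice is consistent along every connecting arc at the top and bottom, and that it closes up around the two wrap-around arcs \emph{precisely because $n$ is even} (for odd $n$ the first and last columns would be forced to agree, clashing at the wrap-around; this is the structural reason the even and odd cases are separated into different theorems). With this orientation the two strands of each column are \emph{parallel}, so each column is already braid-like: Seifert's algorithm turns column $i$ into a power of a single generator with exactly $|a_i|$ crossings and no circle inside the column. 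In particular \emph{no basic moves are ever needed}; this is why the column contribution to the crossing count is $\sum_i|a_i|$ rather than a larger $\tfrac32$-type expression as in Theorem~\ref{TheoremThreeEntries}, and it is also why the statement is uniform in the parities of the individual $a_i$ (only their sum appears).

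Next I would identify the Seifert circles. Smoothing all crossings, the columns and their connecting arcs organize into exactly $n$ circles: one ``outer'' circle built from the two wrap-around arcs together with the outermost strands of the first and last columns, and $n-1$ ``inner'' circles, one between each pair of consecutive columns. Hence the eventual braid will have $n$ strands. A direct inspection of the induced orientations shows these circles are not yet concentric --- the inner ones sit side by side rather than nested --- so the complexity is positive and the diagram is not yet a braid.

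The heart of the proof is to reach complexity zero efficiently, and for this I would use the \emph{column shift} of Figure~\ref{FigureDesplazamientoColumna} rather than elementary reducing moves. Pulling alternate columns to the outside by $\pi$-rotations re-nests the side-by-side circles into a single concentric family; carrying out $\tfrac{n-2}{2}$ such shifts (an integer since $n$ is even) converts the diagram into a braid. Each shift produces four new crossings, two against the top horizontal line and two against the bottom one, so the $4\cdot\tfrac{n-2}{2}=2n-4$ new crossings split into two staircases: the crossings against the top line assemble into the positive block $[1\ua (n-2)]$ at the front of $\beta$, and those against the bottom line into the negative block $[-(n-2)\ua -1]$ in the middle. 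The shifts are arranged so that the odd-index columns collect in the block right after $[1\ua(n-2)]$ and the even-index columns in the block right after $[-(n-2)\ua-1]$; after the shifts every column occupies one of the disjoint strand pairs $(1,2),(3,4),\dots,(n-1,n)$, so these two blocks read off as $(n-1)^{a_1}(n-3)^{a_3}\cdots 1^{a_{n-1}}$ and $(n-1)^{a_2}(n-3)^{a_4}\cdots 1^{a_n}$. Since reversing the orientations of both strands at a crossing preserves its sign, the exponents are the signed integers $a_i$ themselves, regardless of whether a column was oriented up or down, matching the pretzel sign convention of Figure~\ref{FigureDiagramaPretzel}. Gluing the four blocks gives exactly the stated $\beta$, with $\sum_i|a_i|+(2n-4)$ crossings on $n$ strands.

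The step I expect to be the main obstacle is the bookkeeping in this last part: verifying that the chosen order and targets of the $\tfrac{n-2}{2}$ column shifts really do nest the circles (so the complexity genuinely drops to zero), and that the four blocks glue in precisely the claimed positions with the precise signs --- in particular that the top-line shift crossings are positive and the bottom-line ones negative, and that the odd- and even-index columns land at the generators $n-1,n-3,\dots,1$ in the stated order. I would first pin down the pattern on the smallest case $n=4$, where the recipe gives $\beta=[1\ua 2]\,3^{a_1}1^{a_3}\,[-2\ua -1]\,3^{a_2}1^{a_4}$, and then argue that the same alternating scheme reproduces the general word; once the move sequence is verified, the crossing count $2n-4+\sum_i|a_i|$ and the strand count $n$ are immediate.
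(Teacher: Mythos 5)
Your proposal is correct and follows essentially the same route as the paper's proof: the same orientation making the two strands of every column parallel (so no basic moves are needed and the columns contribute exactly $\sum_i |a_i|$ crossings), the same $\frac{n}{2}-1$ column shifts applied to columns $3,5,\dots,n-1$ contributing four crossings each, and the same reading of the braid word as the four blocks $[1\ua (n-2)]$, the odd-indexed column powers, $[-(n-2)\ua -1]$, and the even-indexed column powers. The remaining bookkeeping you flag (nesting of circles after the shifts and placement/signs of the blocks) is precisely what the paper settles by direct inspection of its figures, so there is no substantive gap between your outline and the published argument.
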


\begin{proof}
The pretzel diagram defines a knot if it has exactly one even entry. Otherwise we are dealing with a link. In any case we can orient the pretzel diagram with the top segment going to left, and in each column the two strands have the same orientation, as shown on the left of  Figure~\ref{FigureNparOrientacion}. The corresponding Seifert circles can be seen on the right of the same figure. 
\begin{figure}[ht!]
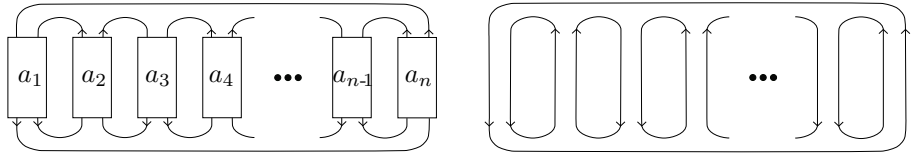
  
\labellist
\pinlabel {$a_1$} at 18 58
\pinlabel {$a_2$} at 68 58
\pinlabel {$a_3$} at 118 58
\pinlabel {$a_4$} at 168 58
\pinlabel {$a_{n\text{-}\!1}$} at 272 58
\pinlabel {$a_n$} at 323 58
\endlabellist
\begin{center}
\NparOrientacion
\hspace{0.5cm}
\NparCirculosSeifert
\end{center}
\caption{Chosen orientation if $n$ is even, and the corresponding Seifert circles.} \label{FigureNparOrientacion} 
\end{figure}

We then use $\frac{n}{2}-1$ column shifts, applied to the columns $3, 5, \dots, n-1$. Figure~\ref{FigureNparDesplazamientos} shows how to do it. 
\begin{figure}[ht!]
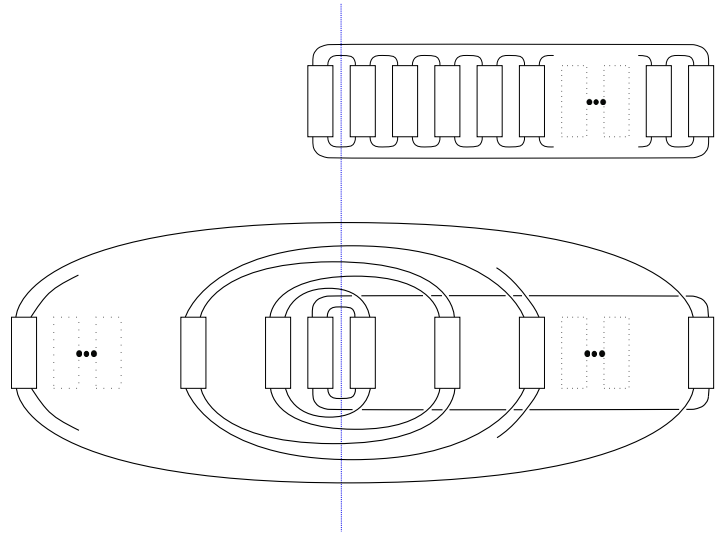
  
\labellist
\endlabellist
\begin{center}
\NparDesplazamientos
\end{center}
\caption{Column shifts of the columns $3, 5, \dots, n-1$, where $n$ is even.} \label{FigureNparDesplazamientos} 
\end{figure}

\begin{figure}[ht!]
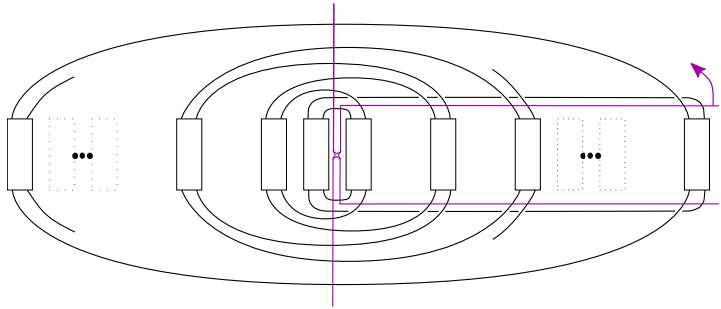
  
\labellist
\endlabellist
\begin{center}
\NparTrenza
\end{center}
\caption{The braid when $n$ is even, divided in four braid subwords.} \label{FigureNparTrenza} 
\end{figure}
The result are $n$ columns with entries, from left to right, $a_{n-1}$, $a_{n-3}$, $\dots$, $a_3$, $a_1$, $a_2$, $a_4$, $\dots$, $a_{n-2}$, $a_n$, and $4$ extra crossings for each displaced column, two with the top horizontal segment, two with the bottom horizontal segment, therefore a total of $\sum_{i=1}^n |a_i| + 4(\frac{n}{2}-1) = 2n+\sum_{i=1}^n |a_i| -4$ crossings. The configuration of the Seifert circles has then complexity equal to zero, and in the braid word the two groups of crossings with both horizontal segments provide the braid words $[1\ua(n-2)]$ and its inverse. There are also two other groups (see Figure~\ref{FigureNparTrenza}), that correspond to the second and four lines in the statement. The number of strands is $n$.
\end{proof}

\newpage

\begin{theorem} \label{TheoremOddEntries}
Let $P=P(a_1, a_2, \dots, a_n)$ be a pretzel diagram with an odd number $n$ of entries. Then $P=\hat{\beta}$ where
\begin{enumerate}
\item If there is at least one even entry, that we may assume to be $a_1$ by the cyclic property, then
$$
\begin{array}{rcl}
\beta & = & 1^{a_{n-1}} 3^{a_{n-3}} \dots (n-2)^{a_2} \\
&&s_{a_1} [1-n \da 2-n-\frac{|a_1|}{2}] \\
&&[2-n \ua -1] \\
&&1^{a_n}3^{a_{n-2}}\dots (n-2)^{a_3} \\
&& [1\ua n-2] \\
&& s_{a_1}( [n-1 \ua n-3+\frac{|a_1|}{2}] \, [2-n-\frac{|a_1|}{2} \ua 1-n]).
\end{array}
$$

\item If there is no even entries at all, 
$$
\beta = G(n) \dots G(3) G(2) \, 
T(1) G(2)^{-1}T(2) G(3)^{-1} \dots T(n-1)G(n)^{-1} T(n) 
$$
where, for $j=1, \dots, n$ 
$$
\begin{array}{rcl}
T(j)  
& = & \epsilon_j ([-2\da -b_j-1] [1\ua b_j] [-b_j-1 \ua -1]) \\
& = & \sigma_{2}^{-\epsilon_j}\sigma_{3}^{-\epsilon_j}\dots\sigma_{b_j+1}^{-\epsilon_j} 
(\sigma_{1}^{\epsilon_j}\sigma_{2}^{\epsilon_j}\dots\sigma_{b_j}^{\epsilon_j})
\sigma_{b_j+1}^{-\epsilon_j}\dots\sigma_{2}^{-\epsilon_j}\sigma_{1}^{-\epsilon_j} \\
\end{array}
$$
being $b_j = \frac{|a_j|-1}{2}$ and $\epsilon_j = \frac{a_j}{|a_j|}$ the sign of the entry $a_j$, and for $k= 2, \dots, n$
$$
G(k)=G(k-1,k)\dots G(2,k)G(1,k),
$$
where for $1\leq i < j \leq n$
$$
G(i,j) = [-B(i,j)-b_j \da -B(i,j)-b_j-b_i+1]^{\ua b_j \ua}
$$
being $B(i,j) = 2+b_{i+1}+\dots + b_{j-1}$, $1\leq i < j \leq n$.
\end{enumerate}
In the first case $\beta$ has $\frac{|a_1|}{2}+\sum_{j=1}^n |a_j| + 2n - 5$ crossings and $\frac{|a_1|}{2}+n-1$ strands. In the second case $\beta$ has 
$\frac{1}{2}\left(3 \sum_{j=1}^n |a_j| + \sum_{1\leq i < j \leq n} (|a_i|-1)(|a_j|-1) - n \right)$ crossings and $\frac{1}{2} \left(4-n+\sum_{j=1}^n |a_j|\right)$ strands. 
\end{theorem}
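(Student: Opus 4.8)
The plan is to treat the two cases by generalizing the constructions already set up for three entries (Theorem~\ref{TheoremThreeEntries}) and for an even number of entries (Theorem~\ref{TheoremEvenEntries}). In both cases the recipe is the same: fix an orientation, turn each column into a nest of concentric Seifert circles by basic moves, kill the remaining incompatibilities with column shifts and (multiple) reducing moves until the complexity is zero, and then read off the braid word together with its strand and crossing counts. The crossing counts in the statement are themselves a reliable guide to the construction: expanding them shows that $\sum_j|a_j|$ comes from the original diagram, $\sum_j b_j$ from the basic moves, and (in the all-odd case) $\sum_{i<j}2b_ib_j$ from the reducing moves, which tells us in advance exactly how many moves of each kind must be performed and where the crossings will land.

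For the first case I would single out the even entry, called $a_1$ by the cyclic property, and orient $P$ with the top segment running to the left, so that the two strands of the $a_1$-column run in opposite directions (as the central column does in the first case of Theorem~\ref{TheoremThreeEntries}) while the two strands of every odd column run in the same direction (as in Theorem~\ref{TheoremEvenEntries}). Performing $\frac{|a_1|}{2}$ basic moves on the even positions of the $a_1$-column produces $\frac{|a_1|}{2}$ concentric Seifert circles, adding $\frac{|a_1|}{2}-1$ crossings since the last move is redundant; these account for the blocks $s_{a_1}[1-n\da 2-n-\frac{|a_1|}{2}]$ and $s_{a_1}([n-1\ua n-3+\frac{|a_1|}{2}]\,[2-n-\frac{|a_1|}{2}\ua 1-n])$, with the factor $s_{a_1}$ recording the colour (sign) of the scars. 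The remaining $n-1$ odd columns carry no basic moves and are rearranged by column shifts exactly as in Theorem~\ref{TheoremEvenEntries}, which splits their powers into the two interleaved products $1^{a_{n-1}}3^{a_{n-3}}\dots(n-2)^{a_2}$ and $1^{a_n}3^{a_{n-2}}\dots(n-2)^{a_3}$ and produces the two horizontal blocks $[2-n\ua -1]$ and $[1\ua n-2]$. Once the diagram has complexity zero I would read the word in the stated order and check directly from it that the strand count is $\frac{|a_1|}{2}+(n-1)$ and the crossing count is $\frac{|a_1|}{2}+\sum_j|a_j|+2n-5$.

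For the second (all-odd) case the construction is the uniform generalization of the three-odd-entries argument. After orienting with the top segment to the left I would apply $b_j=\frac{|a_j|-1}{2}$ basic moves to each column $j$, obtaining $n$ nests of concentric Seifert circles, a total of $2+\sum_j b_j$ circles (the claimed strand count), together with the twist-and-scar blocks $T(j)$. Unlike the three-entry case, where only the two extreme nests were incompatible, here every pair of nests is incompatible, so I would apply a multiple reducing move between the circles of columns $i$ and $j$ for each pair $1\le i<j\le n$. That move creates $2b_ib_j$ crossings, recorded in the word by the block $G(i,j)=[-B(i,j)-b_j\da -B(i,j)-b_j-b_i+1]^{\ua b_j\ua}$ (which has length $b_ib_j$) near the front and by its inverse $G(i,j)^{-1}$ further along, the offset $B(i,j)=2+b_{i+1}+\dots+b_{j-1}$ being the position at which the $j$-th nest sits once the intervening nests are in place. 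Grouping the blocks as $G(k)=G(k-1,k)\dots G(1,k)$ and interleaving them with the $T(j)$ in the order $G(n)\dots G(2)\,T(1)G(2)^{-1}T(2)\dots T(n-1)G(n)^{-1}T(n)$ gives the asserted word, whose crossing total $\frac{1}{2}(3\sum_j|a_j|+\sum_{i<j}(|a_i|-1)(|a_j|-1)-n)$ is just the tally $\sum_j|a_j|+\sum_j b_j+\sum_{i<j}2b_ib_j$.

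The main obstacle is the index bookkeeping in the all-odd case. Pinning down the exact generators produced by the multiple reducing move between two nests, and in particular justifying the offset $B(i,j)$ and the stacking of $b_j$ shifted copies encoded by $[\,\dots\,]^{\ua b_j\ua}$, requires carefully tracking how the $b_i$ circles of one nest thread through the $b_j$ circles of another and then confirming that assembling the blocks in the prescribed order reproduces a genuine top-to-bottom reading of the complexity-zero diagram, with no sign or position errors and with each reducing move read consistently in its two occurrences $G(i,j)$ and $G(i,j)^{-1}$. I expect the cleanest way to control this is an induction on $n$: peel off the outermost column $a_n$, which contributes $G(n)$ at the front, $G(n)^{-1}$ in the middle and $T(n)$ at the end, and verify that its reducing moves against the inner nests interact correctly with the word already built for $P(a_1,\dots,a_{n-1})$. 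The strand and crossing counts, by contrast, are routine consequences of the move-by-move tally above.
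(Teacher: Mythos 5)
Your overall route is the paper's: in case 2 your plan (odd-position basic moves in every column, multiple reducing moves between every pair of nests, the blocks $T(j)$ and $G(i,j)$ with offset $B(i,j)$, assembled as $G(n)\dots G(2)\,T(1)G(2)^{-1}\dots T(n)$) is exactly the paper's proof; the induction on $n$ you suggest for the bookkeeping is only packaging, since the paper instead reads the word directly off the final complexity-zero diagram (its Figures 15--17), and that part of your plan is sound.

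The genuine gap is in case 1, at the very first step. You write that one can ``orient $P$ with the top segment running to the left, so that'' the $a_1$-column is antiparallel and every other column is parallel, as if this orientation were automatic once the top segment is oriented. That is true only when $P$ is a knot, i.e.\ when $a_1$ is the \emph{only} even entry; the hypothesis of case 1 allows further even entries, in which case $P$ has several components whose orientations must be chosen coherently, and the existence of an orientation making every column except the first well oriented (N-oriented in even positions, S-oriented in odd positions) is a nontrivial claim. The paper proves it by induction on the number $k$ of even entries: letting $j$ be the first even entry after $a_1$, it compares $P$ with $P'=P(a_1,\dots,a_{j-1},1+a_j,a_{j+1},\dots,a_n)$, applies the inductive hypothesis to $P'$, and transports the orientation back to $P$ by suitably orienting the extra component created between columns $1$ and $j$. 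Your proposal contains no argument for this and implicitly assumes exactly one even entry; without it, the basic-moves-plus-column-shifts construction does not get started when there are two or more even entries.

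A smaller arithmetic point in the same case: performing the $\frac{n-1}{2}$ column shifts ``exactly as in Theorem~\ref{TheoremEvenEntries}'' would add four crossings per shift, i.e.\ $2n-2$ in total, giving $\frac{|a_1|}{2}+\sum_j|a_j|+2n-3$ crossings, two more than the stated count. The paper observes that, $n$ being odd, the shift of the last column contributes fewer crossings (only one on top), which is why the horizontal blocks $[2-n\ua -1]$ and $[1\ua n-2]$ have length $n-2$ rather than $n-1$ and the total is $\frac{|a_1|}{2}+\sum_j|a_j|+2n-5$. Your deferred ``check directly'' would have surfaced this, but as written the tally in your second paragraph overcounts by two.
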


\begin{proof}
Assume first that there is at least one even entry. By the cyclic property we can assume that $a_1$ is even. We say that a column is N-oriented (S-oriented) if its two strands are oriented from bottom to top (from top to bottom); otherwise we say that the column is not well oriented. We claim that we can orient the pretzel diagram in such a way that the columns in even position are N-oriented and those in odd positions are S-oriented, except the first one, that is not well oriented (see Figure~\ref{FigureNimparPrimeraColumnaParOrientacion}).

\begin{figure}[ht!]
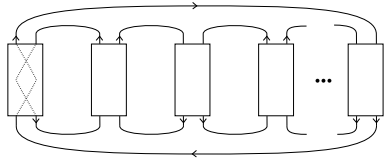
  
\labellist
\endlabellist
\begin{center}
\NimparPrimeraColumnaParOrientacion
\end{center}
\caption{How to orient the pretzel diagram when $n$ is odd and $a_1$ is even.} \label{FigureNimparPrimeraColumnaParOrientacion} 
\end{figure}

To prove that such an orientation is possible we proceed by induction on the number $k$ of even entries. The case $k=1$ is left to the reader as an exercise. Assume now that there are other even entries, and let $j$ be the first even entry after the first one. Compare now the pretzel diagrams $P=P(a_1, \dots, a_n)$ and  $P'=P(a_1, \dots, a_{j-1}, 1 + a_j, a_{j+1}, \dots, a_n)$  where $P'$ has an extra crossing in the column $j$. By induction we may assume the wished orientation in $P'$. Now this orientation can be carried to $P$ by choosing the appropriate orientation of the extra component of $P$ situated between the columns $1$ and $j$ (see Figure~\ref{FigureNimparPrimeraColumnaParOrientando}).
\begin{figure}[ht!]
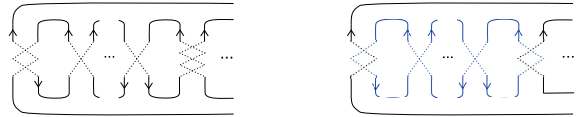
  
\labellist
\endlabellist
\begin{center}
\NimparPrimeraColumnaParOrientando
\end{center}
\caption{Carrying the orientation from $P'$ to $P$.} \label{FigureNimparPrimeraColumnaParOrientando} 
\end{figure}

We now apply $\frac{|a_1|}{2}$ basic moves on the even crossings of the fist column, obtaining $\frac{|a_1|}{2}$ concentric Seifert circles and $\frac{|a_1|}{2}-1$ extra crossings, as shown in Figure~\ref{FigureNimparPrimeraColumnaParMovimientosBasicos}. The color of the scars is the opposite (red are green and vice versa) if $a_1<0$.
\begin{figure}[ht!]
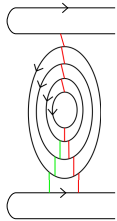
  
\labellist
\endlabellist
\begin{center}
\NimparPrimeraColumnaParMovimientosBasicos
\end{center}
\caption{Basic moves in the first column of $P(a_1, \dots, a_n)$, $n$ odd and $a_1$ even. Color of each scar is the opposite if $a_1<0$. Here $a_1=8$.} \label{FigureNimparPrimeraColumnaParMovimientosBasicos} 
\end{figure}
  
We now apply column shifts to the columns $3, 5, \dots, n$, as shown in Figure~\ref{FigureNimparPrimeraColumnaParDesplazamientos}. 
\begin{figure}[ht!]
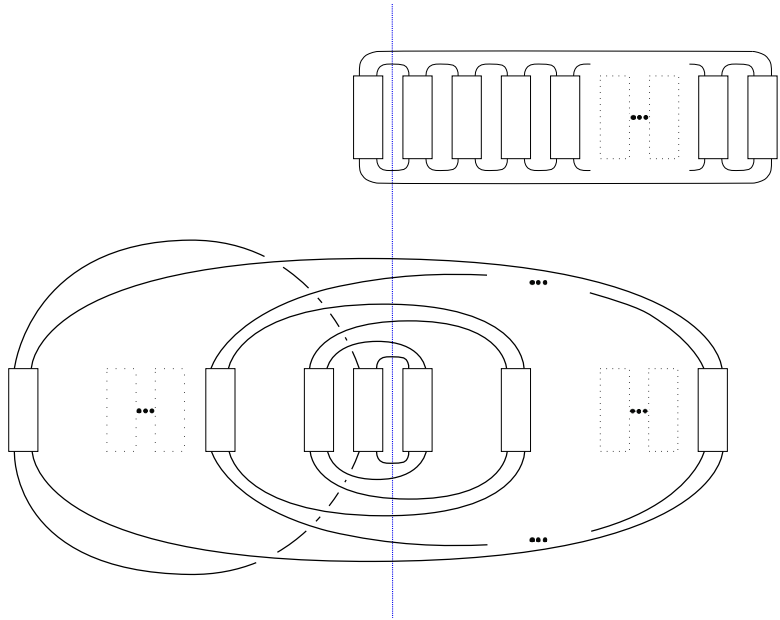
  
\labellist
\endlabellist
\begin{center}
\NimparPrimeraColumnaParDesplazamientos
\end{center}
\caption{Column shifts of the columns $3, 5, \dots, n$, where $n$ is odd and $a_1$ even.} \label{FigureNimparPrimeraColumnaParDesplazamientos} 
\end{figure}

\begin{figure}[ht!]
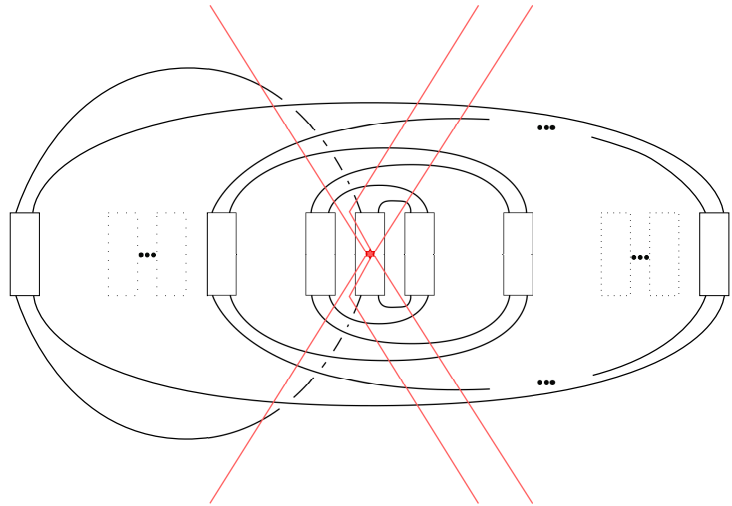
  
\labellist
\endlabellist
\begin{center}
\NimparPrimeraColumnaParTrenza
\end{center}
\caption{The braid when $n$ is odd and $a_1$ is even, divided in six braid words.} \label{FigureNimparPrimeraColumnaParTrenza} 
\end{figure}

\newpage

We obtain a total of $N = \frac{|a_1|}{2} + 2\frac{n-1}{2} = \frac{|a_1|}{2}+n-1$ strands for the final braid, divided in six braid subwords as shown in Figure \ref{FigureNimparPrimeraColumnaParTrenza}. These six subwords are the following:
$$
\begin{array}{rcl}
\beta & = & 1^{a_{n-1}} 3^{a_{n-3}} \dots (n-2)^{a_2} \\
&&s_{a_1} [\frac{|a_1|}{2}-N \ua -(N-1)]\\
&&[-\left( 2\frac{n-1}{2} - 1 \right) \ua -1]\\
&&1^{a_n}3^{a_{n-2}}\dots (n-2)^{a_3} \\
&& [1\ua 2\frac{n-1}{2}-1]\\
&& s_{a_1}\left([(N-2)- (\frac{|a_1|}{2}-2) \ua N-2] [-(N-1) \ua -(N-1)+ (\frac{|a_1|}{2}-1)]\right) \\
& = & 1^{a_{n-1}} 3^{a_{n-3}} \dots (n-2)^{a_2} \\
&& s_{a_1} [1-n \ua 2-n-\frac{|a_1|}{2}] \\
&& [2-n \ua -1]  \\
&& 1^{a_n}3^{a_{n-2}}\dots (n-2)^{a_3} \\
&& [1\ua n-2] \\
&& s_{a_1} \left([n-1 \ua n-3+\frac{|a_1|}{2}] \, [2-n-\frac{|a_1|}{2} \ua 1-n]\right)
\end{array}
$$
where, for the third line, it must be noted that the shift of the last column only gives one crossing on the top. This ends the proof when there is at least one even entry.

We concentrate now on the second case, when $n$ and any entry $a_i$ is odd. We have then a knot which can be oriented as shown in Figure~\ref{FigureNimparTodaEntradaImparOrientacion}.
\begin{figure}[ht!]
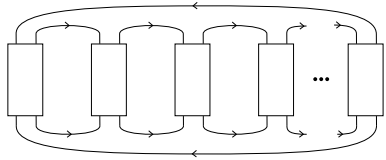
  
\labellist
\endlabellist
\begin{center}
\NimparTodaEntradaImparOrientacion
\end{center}
\caption{Orientation when $n$ and any entry is odd.} \label{FigureNimparTodaEntradaImparOrientacion} 
\end{figure}

We start by applying $b_j=\frac{|a_j|-1}{2}$ basic moves on crossings in odd position (first, third and so on until the antepenultimate) of the column $j$, for $j=1, \dots, n$. In the column $j$ we obtain $b_j$ concentric counterclock oriented Seifert circles and $b_j$ extra crossings. These circles and the corresponding scars (of the crossings)  are shown in Figure~\ref{FigureNimparTodaEntradaImparMovimientosBasicos} for the pretzel diagram $P(1,3,5,7,9)$. Note that the colour of the scars in the column $j$ should be interchanged if it were $a_j<0$.
\begin{figure}[ht!]
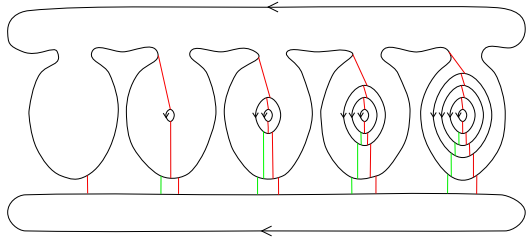
  
\labellist
\endlabellist
\begin{center}
\NimparTodaEntradaImparMovimientosBasicos
\end{center}
\caption{Result of applying $b_j=\frac{|a_j|-1}{2}$ basic moves to the odd crossings in each $j$th-column, when $n$ and any $a_j$ is odd.} 
\label{FigureNimparTodaEntradaImparMovimientosBasicos} 
\end{figure}

Circles in different columns are not compatible, so we apply multiple reducing moves among all the circles in different columns. The blue box in Figure~\ref{FigureNimparTodaEntradaImparEspacioParaReduccion} bounds the space where these moves are performed. What we obtain is shown in Figure~\ref{FigureNimparTodaEntradaImparReduccionMultiple}, where a strand labeled with a non-negative integer $k$ means a collection of $k$ strands that run parallel to the drawn strand, hence 
$\stackrel{k}{\line(1,0){30}}$
means 
$\put(0,0){\line(1,0){30}}
\put(10,3){\dots}
\put(0,7){\line(1,0){30}}
\hspace{1cm} \,$
($k$ strands).
Note also that in Figure~\ref{FigureNimparTodaEntradaImparReduccionMultiple} a strand coming from the column $j$ goes over the strands coming from the column $i$ if $i<j$, hence the crossings of the top half space are negative, those of the lower half space are positive. In the space for multiple reducing moves we find a total of $2\sum_{1\leq i<j\leq n} b_ib_j = \frac{1}{2} \sum_{1\leq i<j\leq n} (|a_i|-1)(|a_j|-1)$ crossings. 
\begin{figure}[ht!]
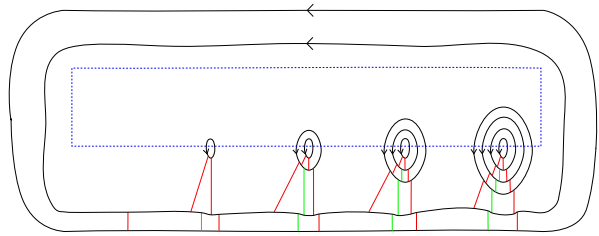
  
\labellist
\endlabellist
\begin{center}
\NimparTodaEntradaImparEspacioParaReduccion
\end{center}
\caption{Space for multiple reducing moves.} \label{FigureNimparTodaEntradaImparEspacioParaReduccion} 
\end{figure}

\begin{figure}[ht!]
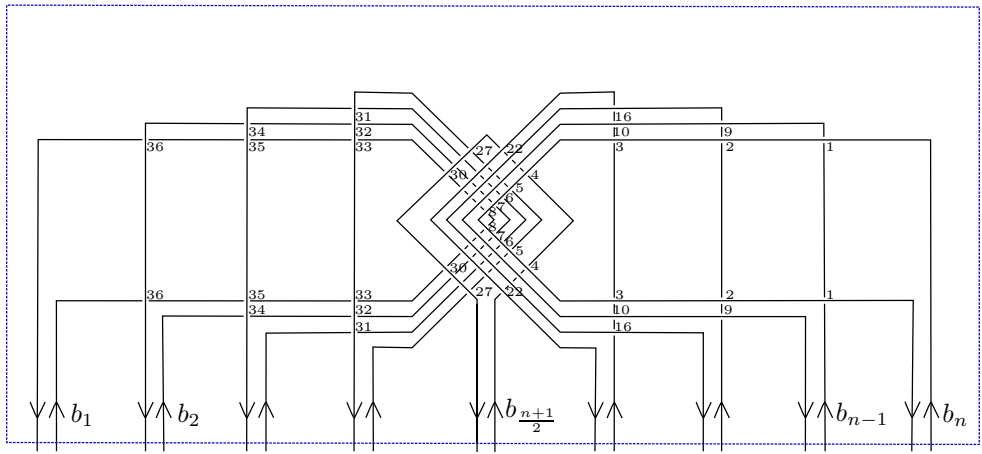
  
\labellist
\pinlabel {$b_1$} at 45 22
\pinlabel {$b_2$} at 108 22
\pinlabel {$b_{\frac{n+1}{2}}$} at 310 22
\pinlabel {$b_{n-1}$} at 505 22
\pinlabel {$b_n$} at 560 22
\pinlabel {\tiny $1$} at 486 180
\pinlabel {\tiny $2$} at 427 180
\pinlabel {\tiny $3$} at 362 180
\pinlabel {\tiny $4$} at 312 164 
\pinlabel {\tiny $5$} at 303 156
\pinlabel {\tiny $6$} at 297 150
\pinlabel {\tiny $7$} at 292 145
\pinlabel {\tiny $8$} at 287 142
\pinlabel {\tiny $9$} at 426 189
\pinlabel {\tiny $10$} at 363 189
\pinlabel {\tiny $16$} at 364 198
\pinlabel {\tiny $22$} at 300 179
\pinlabel {\tiny $27$} at 282 178
\pinlabel {\tiny $30$} at 267 164
\pinlabel {\tiny $31$} at 211 198
\pinlabel {\tiny $32$} at 211 189
\pinlabel {\tiny $33$} at 211 180
\pinlabel {\tiny $34$} at 148 189
\pinlabel {\tiny $35$} at 148 180
\pinlabel {\tiny $36$} at  88 180
\pinlabel {\tiny $1$} at 486 93
\pinlabel {\tiny $2$} at 427 93
\pinlabel {\tiny $3$} at 362 93
\pinlabel {\tiny $4$} at 312 110 
\pinlabel {\tiny $5$} at 303 119
\pinlabel {\tiny $6$} at 297 124
\pinlabel {\tiny $7$} at 292 128
\pinlabel {\tiny $8$} at 287 133
\pinlabel {\tiny $9$} at 426 84
\pinlabel {\tiny $10$} at 363 84
\pinlabel {\tiny $16$} at 364 74
\pinlabel {\tiny $22$} at 300 95
\pinlabel {\tiny $27$} at 282 95
\pinlabel {\tiny $30$} at 267 109
\pinlabel {\tiny $31$} at 211 74
\pinlabel {\tiny $32$} at 211 84
\pinlabel {\tiny $33$} at 211 93
\pinlabel {\tiny $34$} at 148 84
\pinlabel {\tiny $35$} at 148 93
\pinlabel {\tiny $36$} at  88 93
\endlabellist
\begin{center}
\NimparTodaEntradaImparReduccionMultiple
\end{center}
\caption{Multiple reducing moves.} \label{FigureNimparTodaEntradaImparReduccionMultiple} 
\end{figure}

After performing basic moves in each column and then multiple reducing moves among arcs of different columns, we have obtained a presentation of the pretzel knot as a closed braid. The set of crossings in the braid originated by the crossing of the ($b_i$ parallel strands represented by the) strand $i$ (that coming from the column $i$) with the strand $j$, for $i<j$, can be ordered as a subword of the braid word, and it will be called $G(i,j)$. In particular, $G(i,j)$ is the empty braid word if $b_i=0$ or $b_j=0$. Moreover, the braid word can be divided globally into $3n-2$ subwords as we show in Figure~\ref{FigureNimparTodaEntradaImparGrupos}. The wanted braid $\beta$ can be then written as
$$
\beta = G(n) \dots G(3) G(2) \, 
T(1) G(2)^{-1}T(2) G(3)^{-1} \dots T(n-1)G(n)^{-1} T(n)
$$
where $T(j)$ takes into account the crossings derived from the basic moves in the column $j$, and, following the numeration in Figure~\ref{FigureNimparTodaEntradaImparReduccionMultiple}, 
$$
G(k)=G(k-1,k)\dots G(2,k)G(1,k).
$$
\begin{figure}[ht!]
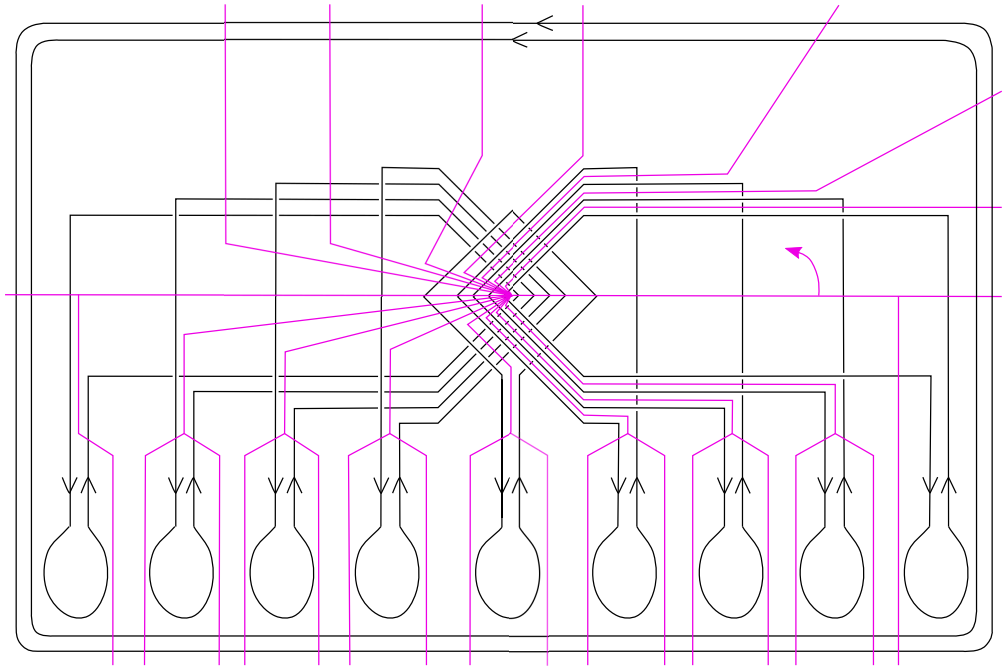
  
\labellist
\endlabellist
\begin{center}
\NimparTodaEntradaImparGrupos
\end{center}
\caption{Mixed diagram representing the final braid: the top part is made of {\it non-amplified} crossings; the lower part shows partial Seifert circles (here $n=9$). This picture allows to distinguish the different subwords in the final braid word. The oriented arc indicates where we start to write the braid word.} \label{FigureNimparTodaEntradaImparGrupos} 
\end{figure}

The braid word $G(i,j)$ is (see Figure~\ref{FigureNimparTodaEntradaImparReduccionMultipleDetalle})
$$
\begin{array}{rcl}
G(i,j) & = & 
(\sigma_{B(i,j)+b_j}^{-1} \sigma_{B(i,j)+b_j+1}^{-1} \dots \sigma_{B(i,j)+b_j+b_i-1}^{-1}) \\
&& (\sigma_{B(i,j)+b_j-1}^{-1}\sigma_{B(i,j)+b_j}^{-1} \dots \sigma_{B(i,j)+b_j-1+b_i-1}^{-1})\\
&& \dots (\sigma_{B(i,j)+1}^{-1}\sigma_{B(i,j)+2}^{-1} \dots \sigma_{B(i,j)+b_i}^{-1}) \\
& = & (-B(i,j)-b_j\da -B(i,j)-b_j-b_i+1) \\
&& (-B(i,j)-b_j+1\da -B(i,j)-b_j+1-b_i+1) \\
&& \dots (-B(i,j)-1\da -B(i,j)-b_i) \\
& = & [-B(i,j)-b_j \da -B(i,j)-b_j-b_i+1]^{\ua b_j \ua}
\end{array}
$$
where $B(i,j)=2+b_{i+1}+\dots + b_{j-1}$ is the number of strands that surround the outside of the crossing. Here it must be understood from the notation that $G(i,j)$ is the empty word if $b_i=0$, that is, if $a_i=\pm 1$. But by definition $G(i,j)$ is also the empty word if $b_j=0$. Note also that one could read $G(i,j)$ following the order of the crossings $1,4,7,10;2,5,8,11;3,6,9,12$ in Figure~\ref{FigureNimparTodaEntradaImparReduccionMultipleDetalle}.

\begin{figure}[ht!]
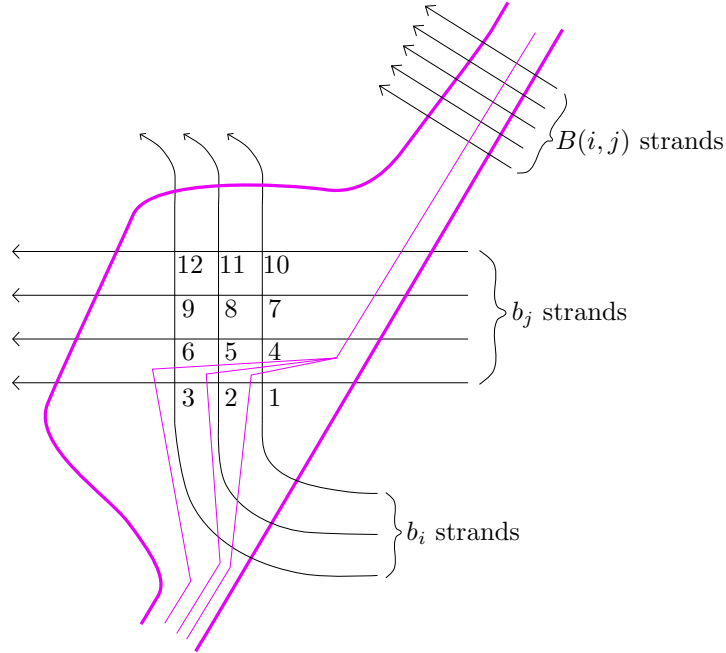
  
\labellist
\pinlabel {$1$} at 198 192
\pinlabel {$2$} at 165 192
\pinlabel {$3$} at 133 192
\pinlabel {$4$} at 198 227
\pinlabel {$5$} at 165 227
\pinlabel {$6$} at 133 227
\pinlabel {$7$} at 198 259
\pinlabel {$8$} at 165 259
\pinlabel {$9$} at 133 259
\pinlabel {$10$} at 199 293
\pinlabel {$11$} at 166 293
\pinlabel {$12$} at 134 293
\pinlabel {$b_i$ strands} at 340 90
\pinlabel {$b_j$ strands} at 420 255
\pinlabel {$B(i,j)$ strands} at 473 385
\endlabellist
\begin{center}
\NimparTodaEntradaImparReduccionMultipleDetalle
\end{center}
\caption{Detail of the multiple reducing move: amplified crossing of Figure~\ref{FigureNimparTodaEntradaImparReduccionMultiple}. 
} \label{FigureNimparTodaEntradaImparReduccionMultipleDetalle} 
\end{figure}

The subword $T(j)$ is 
$$
\begin{array}{rcl}
T(j)  
& = & (-2\da -b_j-1) (1\ua b_j) (-b_j-1 \ua -1) \\
& = & \sigma_{2}^{-1}\sigma_{3}^{-1}\dots\sigma_{b_j+1}^{-1} 
(\sigma_{1}\sigma_{2}\dots\sigma_{b_j})
\sigma_{b_j+1}^{-1}\dots\sigma_{2}^{-1}\sigma_{1}^{-1}
\end{array}
$$
where the central parenthesis corresponds to the green crossings as can be seen in any column in 
Figure~\ref{FigureNimparTodaEntradaImparEspacioParaReduccion} (colors must be interchanged if the corresponding $a_j<0$). Note that $T(j)=\sigma_2^{-1} \sigma_1 \sigma_2^{-1} \sigma_1^{-1}$ if $b_j=1$, that is, $a_j=\pm 3$, and $T(j)=\sigma_1^{-1}$ if $b_j=0$, that is, $a_j=\pm 1$.  In general, in $T(j)$ there are $3b_j+1$ crossings.

We finally count the number of crossings and strands in the obtained braid. The number $c$ of crossings is 
$$
\begin{array}{rcl} 
c & = & \sum_{j=1}^n (3b_j+1) + 2 \sum_{1\leq i < j \leq n} b_ib_j \\ 
&&\\
& = & n + 3 \sum_{j=1}^n b_j + 2 \sum_{1\leq i < j \leq n} b_ib_j \\
&&\\
& = & n + 3 \sum_{j=1}^n \frac{|a_j|-1}{2} + 2 \sum_{1\leq i < j \leq n} \frac{|a_i|-1}{2}\frac{|a_j|-1}{2} \\
&&\\
& = & -\frac{n}{2} + \frac{3}{2} \sum_{j=1}^n |a_j| + \frac{1}{2} \sum_{1\leq i < j \leq n} (|a_i|-1)(|a_j|-1)
\end{array}
$$
and the total number $b$ of strands is 
$$
b = 2+\sum_{j=1}^n b_j
= 2+\sum_{j=1}^n \frac{|a_j|-1}{2}
= 2+ \frac{1}{2} (\sum_{j=1}^n |a_j| - n)
=\frac{1}{2} \left(4-n+\sum_{j=1}^n |a_j|\right).
$$
\end{proof}

\begin{figure}[ht!]
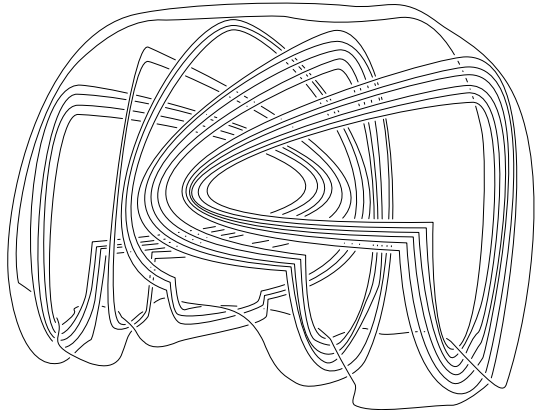
  
\begin{center}
\Ejemplo
\end{center}
\caption{The pretzel knot $P(9, 5, 7, 11, 13)$ as closed braid.}
\label{FigureEjemplo} 
\end{figure}
As an example, we show in Figure~\ref{FigureEjemplo} the braid ($22$ strands and $375$ crossings) for the pretzel knot $P(9, 5, 7, 11, 13)$, according to the second point of Theorem~\ref{TheoremOddEntries}. You can quickly find a braid whose closure is your favourite pretzel link in the following web site, in which the first author has implemented the algorithms described in this paper:
\url{https://adelpozoman.es/blog/braids-for-pretzel/}


\begin{center}
\begin{tabular}{c}
\'Angel del Pozo Manglano\\
ETSIDI, Universidad Polit\'ecnica de Madrid \\
 {\it a.delpozo@alumnos.upm.es} 
\end{tabular}

\begin{tabular}{c}
Pedro M. Gonz\'alez Manch\'on (corresponding author) \\
Department of Applied Mathematics to Industrial Engineering \\
ETSIDI, Universidad Polit\'ecnica de Madrid \\
{\it pedro.gmanchon@upm.es} \\
\end{tabular}
\end{center}

\end{document}